\documentclass[12pt]{article}
\usepackage{amsfonts}
\usepackage{amssymb}
\usepackage{amssymb,amsfonts}
\usepackage{epsfig}
\usepackage{CJK}
\usepackage{color}
\usepackage{mathrsfs}
\setlength{\oddsidemargin}{0mm} \setlength{\evensidemargin}{0mm}
\setlength{\topmargin}{-15mm} \setlength{\textheight}{220mm}
\setlength{\textwidth}{155mm}
\usepackage{amsthm}
\usepackage{amsmath}
\usepackage{graphicx}
\allowdisplaybreaks

 \newtheorem{thm}{Theorem}[section]
 
 \newtheorem{lem}[thm]{Lemma}
 
 \theoremstyle{definition}
 
 \newtheorem{rem}[thm]{Remark}
 \numberwithin{equation}{section}

\def\ep{\epsilon}

\newtheorem{Lemma A.1}{Lemma A.1}
\theoremstyle{definition}

\theoremstyle{remark}

% MATH -----------------------------------------------------------

\begin{document}
\title{A Liouville type theorem for axially symmetric $D$-solutions to steady Navier-Stokes Equations}
\author{Na Zhao\footnotemark[1]}
\renewcommand{\thefootnote}{\fnsymbol{footnote}}

\footnotetext[1]{ Email: nzhao13@fudan.edu.cn. School of Mathematical Sciences, Fudan University,
Shanghai,  200433,  China.
Department of mathematics, University of California, Riverside, CA 92521, USA.}

\date{}

\maketitle

\begin{abstract}
We study axially symmetric $D$-solutions of three dimensional steady Navier-Stokes equations. We prove that
if the velocity $u$ decays like $|x'|^{-(\frac{2}{3})^+}$ uniformly for $z$, or the vorticity $\omega$ decays like $|x'|^{-(\frac{5}{3})^+}$ uniformly for $z$, then $u$ vanishes. Here $|x'|$ denotes the distance to the axis.
\end{abstract}

\section{Introduction}
We consider axially symmetric solutions to the three dimensional steady Navier-Stokes equations
\begin{equation}\label{a01}
\begin{cases}
 \Delta u=u\cdot \nabla u+ \nabla p\ \ \text{in}\  \mathbb{R}^3,\\
 \nabla \cdot u=0\ \ \text{in}\  \mathbb{R}^3,
\end{cases}
\end{equation}
with homogeneous condition at infinity
\begin{align}\label{a02}
\lim_{|x|\rightarrow \infty}u(x)=0,
\end{align}
and finite Dirichlet integral 
\begin{align}\label{a03}
\int |\nabla u|^2 dx<\infty.
\end{align}
Here $u$ and $p$ denote the velocity and pressure, respectively. Solutions satisfying \eqref{a01} with \eqref{a02} and \eqref{a03} are known as $D$-solutions. In 1933, Leray \cite{Le} first studied these solutions by variational method. Since then, an open question whether $D$-solutions equal 0 has attracted many mathematicians. However, there are only partial results obtained under some extra integral or decay assumptions on $u$. For instance, Galdi \cite{Ga} Theorem X.9.5 proved that if $u\in L^{\frac{9}{2}}(\mathbb{R}^3)$, then $u\equiv 0$. In 2016, Chae and Wolf \cite{CW} improved this result by a log factor. In \cite{Chae}, Chae proved that $D$-solutions are 0 if $\Delta u\in L^{\frac{6}{5}}(\mathbb{R}^3)$. The quantity $\|\Delta u\|_{L^{\frac{6}{5}}}$ has the same scaling with the known quantity $\|\nabla u\|_{L^{2}}$. Seregin \cite{S2} showed that $u\in L^6(\mathbb{R}^3)\cap BMO^{-1}$ also implies Liouville type theorem. On the other hand, authors in \cite{CPZ} considered $D$-solutions in the slab $\mathbb{R}^2\times [-\pi,\pi]$ with suitable boundary conditions and proved $u\equiv 0$. 

Recently, in \cite{KTW}, Kozono etc showed that if the vorticity $\omega$ decays faster than $|x|^{-\frac{5}{3}}$ at infinity, then $D$-solutions in $\mathbb{R}^3$ are 0. However, to the best of our knowledge, there is no known result for the \emph{a priori} decay rate (at infinity) of a general $D$-solution or vorticity in dimension three. This situation is different with axially symmetric case. More precisely, in axially symmetric case, even though we still don't know whether the Liouville type theorem holds, we have the \emph{a priori} decay estimate for velocity $u$ and vorticity $\omega$.  In \cite{CJ} and \cite{W}, the authors showed that the \emph{a priori} decay rate is
\begin{align*}
|u(x)|\leq C\big(\frac{\ln r}{r}\big)^{\frac{1}{2}},\ \ |\omega_\theta(x)|\leq C r^{-(\frac{19}{16})^-},\ \ 
|\omega_r(x)|+|\omega_z(x)|\leq C r^{-(\frac{67}{64})^-}.
\end{align*}
Here $C$ denotes a positive constant whose value may change from line to line. For a positive number $a$, we denote $a^-$ as a number smaller but close to $a$ and $a^+$ a number bigger but close to $a$. 
Recently, in \cite{CPZ}, authors gave a simple proof for the decay rate of  $u$ by using Brezis-Gallouet inequality \cite{BG} and improved the decay of $\omega$ to
\begin{align*}
 |\omega_\theta(x)|\leq C\frac{(\ln r)^{\frac{3}{4}}}{r^{\frac{5}{4}}} ,\ \ 
|\omega_r(x)|+|\omega_z(x)|\leq C \frac{(\ln r)^{\frac{11}{8}}}{r^{\frac{9}{8}}}.
\end{align*}
These results are ``what we know" for axially symmetric $D$-solutions. In this paper, we will show ``what we need" to prove Liouville type theorem. Then we will know how far it is from ``what we know" to ``what we need". More specifically, we prove that if velocity $u$ decays like 
\begin{align*}
|u(x)|\leq Cr^{-(\frac{2}{3})^+}
\end{align*}
or vorticity $\omega$ decays like
\begin{align*}
|\omega(x)|\leq r^{-(\frac{5}{3})^+},
\end{align*}
then $u\equiv 0$. We emphasize that this result doesn't need the decay on $z$-direction. Also we remark that we finished the result in April, 2018. Afterwards, we saw the paper by Wang Wendong \cite{Wang} in arXiv, who proved a similar result independently.

Throughout this paper, we will use $x=(x_1,x_2,x_3)$ or $x=(x_1,x_2,z)$ to denote a point in $\mathbb{R}^3$. Also, $x'=(x_1,x_2)$ and $r=|x'|=(x_1^2+x_2^2)^{\frac12}$. $A\lesssim B$ denotes there exists a constant $C$ such that $A\leq CB$ holds. Now we are ready to state our main results.
\begin{thm}\label{thm1}
Let $(u,p)$ be an axially symmetric solution of \eqref{a01} with \eqref{a02} and \eqref{a03}. Suppose that $u$ satisfies 
\begin{align}\label{a04}
|u(x)| \leq \frac{C}{(1+|x'|)^{\alpha}},\ \frac23<\alpha<1,
\end{align}
uniformly for $z$, where $x'=(x_1,x_2)$. Then $u\equiv 0$.
\end{thm}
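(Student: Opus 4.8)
The strategy is the classical energy/Bogovskii-cutoff argument for Liouville-type theorems, made to work under the stated hyperplane-uniform decay \eqref{a04} with exponent $\alpha\in(\tfrac23,1)$. Testing \eqref{a01} with $u\varphi^2$ for a cutoff $\varphi$ supported on $\{R\le|x|\le 2R\}$ yields the Caccioppoli-type identity
\begin{align*}
\int |\nabla u|^2\varphi^2\,dx = -2\int (\nabla u\cdot\nabla\varphi)\cdot u\,\varphi\,dx + \int \tfrac12|u|^2\,u\cdot\nabla(\varphi^2)\,dx + \int p\,u\cdot\nabla(\varphi^2)\,dx.
\end{align*}
Since $\int|\nabla u|^2<\infty$, the left side tends to $\int|\nabla u|^2$ as $R\to\infty$, and we must show every term on the right goes to $0$. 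The $\nabla u\cdot\nabla\varphi$ term is controlled by Cauchy--Schwarz and the finiteness of the Dirichlet integral together with the decay of $u$. The genuinely dangerous terms are the cubic term $\int|u|^2 u\cdot\nabla\varphi^2$ and, above all, the pressure term $\int p\,u\cdot\nabla\varphi^2$.

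The key point is that for axially symmetric solutions one has the a priori bound $|u(x)|\le C(\ln r/r)^{1/2}$ (from \cite{CJ}, \cite{W}, or the simpler \cite{CPZ} proof); combining this with \eqref{a04} gives an interpolated decay $|u(x)|\le C\,r^{-\beta}$ for any $\beta$ between $\alpha$ and $\tfrac12$, and more importantly it lets us localize in $r$. I would split the cutoff shell into the ``near-axis'' part $\{r\le 1\}$ and the ``far'' part $\{r\ge 1\}$. On the far part, \eqref{a04} gives $|u|\le C r^{-\alpha}$, so the cubic term is bounded by $R^{-1}\cdot R^{-3\alpha}\cdot(\text{volume }\sim R^3)$ on the far part, i.e. $O(R^{2-3\alpha})\to 0$ precisely because $\alpha>\tfrac23$. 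The near-axis part is a thin tube of volume $O(R)$ where one uses the global bound $|u|\le C(\ln R/R)^{1/2}$, giving $O(R^{-1}\cdot R^{-3/2}(\ln R)^{3/2}\cdot R)\to0$. For the pressure, I would first establish a decay estimate for $p$: from the equation $-\Delta p=\partial_i\partial_j(u_iu_j)$ with $u\to0$, normalize $p\to0$ at infinity; using the Biot--Savart/Calderón--Zygmund representation together with the $u$-decay one expects $|p(x)|\lesssim r^{-2\alpha}$ (up to logs near the axis) uniformly in $z$. Then the pressure term is $O(R^{-1}\cdot R^{-2\alpha}\cdot R^{-\alpha}\cdot R^3)=O(R^{2-3\alpha})\to0$ on the far region, and again a thin-tube estimate handles $r\le 1$. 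Once all boundary terms vanish, $\int|\nabla u|^2=0$, so $u$ is constant, and \eqref{a02} forces $u\equiv0$.

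**Main obstacle.**
The crux is the pressure estimate. Unlike $u$ and $\omega$, the pressure is only defined up to a constant and is not obviously small; one must prove a pointwise (or at least integrated, shell-averaged) decay rate for $p$ that is compatible with \eqref{a04}. I expect to handle this by writing $p = p_1 + p_2$ where $p_1$ collects the singular-integral contribution from a neighborhood of $x$ (controlled by the local $L^\infty$ bound on $|u|^2$, hence $O(r^{-2\alpha})$ after the interpolation), and $p_2$ is the far-field tail, which is harmonic-like and handled by a mean-value/Harnack argument using $\int|\nabla u|^2<\infty$ and the known decay — the subtlety is making the near-axis logarithmic loss harmless, which is exactly why the argument only needs $\alpha$ strictly above $\tfrac23$ rather than equal to it. A secondary technical nuisance is that \eqref{a04} is uniform in $z$ but gives no $z$-decay, so all shell integrals must be organized by slicing in $z$ and summing, using that each horizontal slice contributes an $O(R^{2-2\alpha}\cdot\ldots)$ that is summable against the $O(R)$ worth of slices in the shell; keeping the bookkeeping so that the total exponent stays negative is where care is required. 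Once the pressure decay is in hand, the rest is a routine cutoff computation.
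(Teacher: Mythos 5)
Your overall architecture (energy identity with a cutoff on the shell $\{R\le|x|\le2R\}$, plus a pointwise decay estimate $|p|\lesssim r^{-2\alpha}$ up to logs obtained from the singular-integral representation of $p$) is the same as the paper's, but the way you pass to the limit in the cubic and pressure terms contains a genuine gap. Your bound ``$R^{-1}\cdot R^{-3\alpha}\cdot R^{3}=O(R^{2-3\alpha})$'' treats the hypothesis \eqref{a04} as if it gave decay in $|x|$, whereas it only gives decay in $r=|x'|$, uniformly in $z$. On the shell, the region where $r$ is of order one (two tube segments of length $\sim R$ along the axis) carries no smallness at all: the a priori bound $|u|\le C(\ln r/r)^{1/2}$ of \cite{CJ,W,CPZ} is also in terms of the distance $r$ to the axis, so on that tube you only know $|u|=O(1)$, not $O((\ln R/R)^{1/2})$. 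Carrying out the cylindrical bookkeeping you allude to, the cubic term is controlled by $\frac1R\int_{-2R}^{2R}\int_0^{2R}(1+r)^{-3\alpha}\,r\,dr\,dz\lesssim\int_0^{2R}(1+r)^{1-3\alpha}dr=O(1)$ for $\alpha>\frac23$, and similarly the pressure term is $O(1)$ (with a log): the direct estimate gives boundedness, not convergence to zero, so the argument as written does not close.

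The missing ingredient, which is exactly how the paper resolves this, is an interpolation with the global integrability $u\in L^6(\mathbb{R}^3)$ (Sobolev embedding from \eqref{a03}): write $|u|=|u|^{1-\epsilon}|u|^{\epsilon}$ (respectively $|u|^3=|u|^{3-\epsilon}|u|^{\epsilon}$), apply H\"older with exponents $\frac{6}{6-\epsilon}$ and $\frac{6}{\epsilon}$, and let the factor $\big(\int_{B_{2R}\setminus B_R}|u|^6\big)^{\epsilon/6}\to0$ supply the vanishing, while the complementary factor times $R^{-1}$ is shown to be uniformly bounded using the decay of $p$ and $u$ in $r$; the boundedness requirement is $1-\alpha\frac{6(3-\epsilon)}{6-\epsilon}\le-1$, which can be met with some $\epsilon\in(0,1]$ precisely when $\alpha>\frac23$. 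So the threshold $\alpha>\frac23$ enters through this H\"older/interpolation step, not through a power count $R^{2-3\alpha}\to0$. Your pressure decay plan is essentially workable (the paper proves $|p|\lesssim \ln(e+r)\,(1+r)^{-2\alpha}$ by splitting the Calder\'on--Zygmund integral into $|x-y|\le1$, $\{|x-y|\ge1,|x'-y'|\ge\frac12\}$ and $\{|x-y|\ge1,|x'-y'|\le\frac12\}$, using the a priori bounds on $\nabla u$ near the diagonal), but without the $\epsilon$-splitting of the boundary terms your proof does not reach the conclusion.
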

\begin{thm}\label{thm2}
Let $(u,p)$ be an axially symmetric solution of \eqref{a01} with \eqref{a02} and \eqref{a03}. 
Let $\omega=\nabla \times u$ be the vorticity. Suppose that $\omega$ satisfies 
\begin{align}\label{a05}
|\omega(x)| \leq \frac{C}{(1+|x'|)^{\alpha+1}},\ \frac23<\alpha<1,
\end{align}
uniformly for $z$, where $x'=(x_1,x_2)$. Then $u\equiv 0$.
\end{thm}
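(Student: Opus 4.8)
\emph{Proof proposal for Theorem \ref{thm2}.} The plan is to deduce Theorem \ref{thm2} from Theorem \ref{thm1} by showing that the vorticity bound \eqref{a05} forces the velocity bound \eqref{a04} with the \emph{same} exponent $\alpha$; granting that, Theorem \ref{thm1} gives $u\equiv 0$ immediately. The bridge is the Biot--Savart law. Recall that, being a $D$-solution, $u$ is smooth and, since it vanishes at infinity by \eqref{a02}, bounded on $\mathbb{R}^3$; also $\omega=\nabla\times u$ is automatically divergence free, and every object below is axially symmetric because $\omega$ is and the Biot--Savart kernel commutes with rotations about the axis.

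First I would set
\begin{equation*}
v(x):=\frac{1}{4\pi}\int_{\mathbb{R}^3}\frac{(x-y)\times\omega(y)}{|x-y|^3}\,dy .
\end{equation*}
The key point is that the kernel is homogeneous of degree $-2$, so for fixed $x'\neq y'$ the integral of $|x-y|^{-2}$ in the single variable $y_3$ over $\mathbb{R}$ converges and equals $\pi/|x'-y'|$. Combining this with \eqref{a05}, which is uniform in $z$, gives
\begin{equation*}
|v(x)|\lesssim \int_{\mathbb{R}^2}\frac{dy'}{|x'-y'|\,(1+|y'|)^{\alpha+1}} .
\end{equation*}
Splitting this planar convolution over $\{|y'|\le |x'|/2\}$, $\{|y'-x'|\le |x'|/2\}$ and $\{|y'|\ge 2|x'|\}$ and using $0<\alpha<1$ in each region (the local singularity $|x'-y'|^{-1}$ is integrable in $\mathbb{R}^2$, and the tail converges since $\alpha>0$) yields $|v(x)|\lesssim (1+|x'|)^{-\alpha}$ uniformly in $z$.

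Next I would verify that $v$ is the velocity generated by $\omega$, i.e. $\nabla\times v=\omega$ and $\nabla\cdot v=0$ on $\mathbb{R}^3$. One cannot simply write $v$ as the curl of the Newtonian potential of $\omega$, whose kernel has degree $-1$ and divergent $y_3$-integral; instead I would work with the truncations $v_N(x)=\frac{1}{4\pi}\int_{B_N}\frac{(x-y)\times\omega(y)}{|x-y|^3}\,dy=\nabla\times F_N$, where $F_N$ is the (convergent) Newtonian potential of $\omega\mathbf{1}_{B_N}$, so that $\nabla\times v_N=\omega\mathbf{1}_{B_N}+\nabla(\nabla\cdot F_N)$ and, by $\nabla\cdot\omega=0$, $\nabla\cdot F_N$ reduces to a surface integral over $\partial B_N$. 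Using \eqref{a05} and $\alpha<1$ one checks that this surface term and its gradient tend to $0$ locally uniformly as $N\to\infty$, while $v_N\to v$ locally uniformly; hence $\nabla\times v=\omega$, $\nabla\cdot v=0$. Then $w:=u-v$ is curl free and divergence free, hence harmonic on $\mathbb{R}^3$, and bounded (both $u$ and $v$ are), hence constant by Liouville; letting $|x'|\to\infty$ along $z=0$ both $u$ and $v$ tend to $0$, so $w\equiv 0$. Therefore $|u(x)|\lesssim (1+|x'|)^{-\alpha}$ uniformly in $z$, which is exactly \eqref{a04}, and Theorem \ref{thm1} finishes the proof.

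The genuinely delicate point is the last one: justifying the Biot--Savart representation $u=v$ for a solution that need not lie in any $L^p(\mathbb{R}^3)$ and is a priori known to decay only in the $r$-direction. One must handle the conditionally convergent singular integral and control the contributions of large spheres, which is precisely where the restriction $\alpha<1$ enters; here the a priori decay of $\omega$ recalled in the introduction is convenient for the far-field estimates. The remaining ingredients — the planar convolution bound, the boundedness of $u$ (it is continuous and vanishes at infinity), and the Liouville property of bounded harmonic functions — are routine.
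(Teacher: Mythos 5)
Your proposal is correct, and its quantitative core is exactly the paper's: you reduce Theorem \ref{thm2} to Theorem \ref{thm1} through a decay-transfer step based on the Biot--Savart kernel, integrating in $y_3$ first to get the planar bound $\int_{\mathbb{R}^2}|x'-y'|^{-1}(1+|y'|)^{-\alpha-1}dy'$ and then splitting that convolution, which is precisely Lemma \ref{lem2} of the paper (with $\beta=\alpha+1$). Where you genuinely diverge is in how the representation $u(x)=-\int\nabla_y\Gamma(x-y)\times\omega(y)\,dy$ is justified: the paper multiplies $-\Delta u=\nabla\times\omega$ by a cut-off, integrates by parts as in Lemma \ref{lem1}, and lets $R\to\infty$, killing the boundary terms with $u\to0$; you instead build $v$ directly from the kernel, verify $\nabla\times v=\omega$ and $\nabla\cdot v=0$ via truncated potentials $F_N$ (with the surface term on $\partial B_N$ controlled by \eqref{a05}), and then identify $u=v$ by observing that $u-v$ is curl-free, divergence-free, bounded, hence a constant which vanishes along $z=0$, $|x'|\to\infty$. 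Both routes work; the paper's is more self-contained and parallels its pressure lemma, while yours isolates cleanly where the hypotheses enter (boundedness of $u$ from smoothness plus \eqref{a02}, and $0<\alpha<1$ for the far-field/surface estimates) and avoids tracking boundary terms involving $\nabla u$. Two cosmetic slips you should fix: with the standard convention your $v$ equals $-u$ (the Biot--Savart integrand is $\omega(y)\times(x-y)$, equivalently $-(x-y)\times\omega(y)$), so either flip the sign or you will get $\nabla\times v=-\omega$; and the three regions you list, $\{|y'|\le|x'|/2\}$, $\{|y'-x'|\le|x'|/2\}$, $\{|y'|\ge2|x'|\}$, do not cover $\mathbb{R}^2$ --- the omitted annular region where $|y'|\approx|x'|$ but $y'$ is far from both $0$ and $x'$ gives the bound $(1+|x'|)^{-\alpha}$ trivially, but it must be included (the paper's split $\{|x'-y'|<|x'|/2\}$, $\{|x'|/2<|x'-y'|<3|x'|\}$, $\{|x'-y'|>3|x'|\}$ is exhaustive).
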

In the following sections, we will prove our main results. In section 2, the decay estimate for pressure $p$ is investigated. In section 3, we prove Theorem \ref{thm1}. In section 4, we prove Theorem \ref{thm2}.

\section{Decay estimate for $p$}
In this section, we study the decay estimate for pressure $p$. This will be an important ingredient for the proof of 
Theorem \ref{thm1}. In \cite{Ga} Theorem X.5.1, Galdi proved that there exists a constant $p_0\in \mathbb{R}$ such that $p(x)\rightarrow p_0$ as $x\rightarrow \infty$. We know from the equation that $(u,p-p_0)$ is also a solution to \eqref{a01} if $(u,p)$ is a solution. Thus, without loss of generality, we assume that $p_0=0$.
\begin{lem}\label{lem1}
Under the condition of Theorem \ref{thm1}, we have the following estimate for pressure $p$:
\begin{align}\label{b01}
|p(x)|\leq \frac{C\ln (e+|x'|)}{(1+|x'|)^{2\alpha}},\ \frac23<\alpha<1,
\end{align}
uniformly for $z$.
\end{lem}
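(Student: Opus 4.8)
The plan is to derive the pressure bound from the Poisson equation that $p$ satisfies, namely $-\Delta p = \partial_i\partial_j(u_i u_j) = \nabla\cdot(u\cdot\nabla u)$, which follows by taking the divergence of the momentum equation and using $\nabla\cdot u=0$. Since $p\to 0$ at infinity and $p$ is bounded (by Galdi's results quoted above), we may represent $p$ via the Newtonian potential of the right-hand side, $p(x) = c\int_{\mathbb R^3}\partial_i\partial_j\frac{1}{|x-y|}\,u_i(y)u_j(y)\,dy$, interpreting the singular integral in the principal-value sense plus the usual local correction term $\tfrac{c}{3}|u(x)|^2$. The task then reduces to estimating this convolution using only the hypothesis $|u(y)|\le C(1+|y'|)^{-\alpha}$ together with the finite Dirichlet integral \eqref{a03}.

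The key steps, in order: (i) Fix $x=(x',z)$ with $|x'|=R$ large, and split $\mathbb R^3$ into the near region $\{|y-x|\le R/2\}$, an intermediate region, and the far region $\{|y-x|\ge 2R\}$ or so; in the near region the integrand's decay factor is comparable to $R^{-2\alpha}$ (since there $|y'|\sim R$) and the kernel singularity $|x-y|^{-3}$ is integrable against the BMO/Dirichlet control — here one should use that $u\in L^6$ (from \eqref{a03} via Sobolev) or a Calderón–Zygmund estimate to absorb the singular part, yielding a contribution $\lesssim R^{-2\alpha}$, possibly with a logarithm from a borderline exponent. (ii) In the far region, integrate the kernel $|x-y|^{-3}$ against $|u(y)|^2\le C(1+|y'|)^{-2\alpha}$ directly; because the decay is only in the $y'$ variable and not in the axial variable, the $z$-integral $\int_{\mathbb R}(|x-y|^2)^{-3/2}\,dy_3 \sim (\text{horizontal distance})^{-2}$ is what rescues convergence, and the remaining two-dimensional integral $\int_{\mathbb R^2}|x'-y'|^{-2}(1+|y'|)^{-2\alpha}\,dy'$ is logarithmically divergent exactly at the scale $|y'|\sim R$, producing the $\ln(e+R)$ factor and the $R^{-2\alpha}$ decay. (iii) Combine the pieces and note that $|u(x)|^2\le CR^{-2\alpha}$ handles the local term, giving \eqref{b01}.

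I expect the main obstacle to be the intermediate/near-field analysis where the Calderón–Zygmund singular integral meets the slowly-decaying tail: one must be careful that the second-derivative kernel acting on $u_iu_j$ does not destroy the $R^{-2\alpha}$ rate, and this is where the logarithmic loss genuinely enters (as opposed to being an artifact). A clean way to organize this is to write $u_i u_j = (u_i - \bar u_i)(u_j-\bar u_j) + \ldots$ against a suitable local average $\bar u$ on each dyadic shell, so that the BMO-type / Poincaré control from $\int|\nabla u|^2<\infty$ can be exploited on the singular part, while the smooth far part is handled by the crude pointwise bound as in step (ii). The careful bookkeeping of which exponent is borderline — and hence carries the single power of $\ln(e+|x'|)$ — is the technical heart of the lemma; everything else is routine estimation of convolution integrals with the axially-symmetric (no $z$-decay) weight.
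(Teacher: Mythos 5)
Your overall strategy coincides with the paper's: represent $p$ as $-\tfrac13|u(x)|^2$ plus the principal-value integral of $\partial_{y_i}\partial_{y_j}\Gamma(x-y)$ against $u_iu_j$, integrate in $y_3$ first away from $x$, and extract the factor $\ln(e+|x'|)$ from the borderline two-dimensional shell where $|y'|\sim|x'|$. The far-field part of your sketch is essentially the paper's $I_2$ (with the caveat that the column $\{|x'-y'|\le \tfrac12,\ |x-y|\ge 1\}$ must be treated separately, since there the reduction to $\int |x'-y'|^{-2}(1+|y'|)^{-2\alpha}\,dy'$ is divergent; one instead uses $|x_3-y_3|\ge\tfrac12$ and performs the $y_3$-integration directly, as in the paper's $I_3$).

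The genuine gap is in the near-field region $|x-y|\lesssim 1$. After exploiting the cancellation $P.V.\int_{|x-y|\le 1}\partial_{y_i}\partial_{y_j}\Gamma\,dy=0$, one must bound $\int_{|x-y|\le 1}|x-y|^{-3}\,\big|(u_iu_j)(y)-(u_iu_j)(x)\big|\,dy$ by $C(1+|x'|)^{-2\alpha}$, and this requires a \emph{quantitative} pointwise modulus of continuity of $u$ near $x$ that decays in $|x'|$. The tools you propose cannot supply this rate: $u\in L^6$ or a global Calder\'on--Zygmund bound only yields integral information (e.g.\ $p\in L^3$), not pointwise decay at a fixed $x$; and the Poincar\'e/BMO decomposition based on $\int|\nabla u|^2\,dx<\infty$ only gives $\|\nabla u\|_{L^2(B(x,1))}\to 0$ as $|x|\to\infty$ with no rate, so your oscillation terms are $o(1)$ but not $O\big((1+|x'|)^{-2\alpha}\big)$, which is the bound the proof of Theorem \ref{thm1} actually consumes. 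The paper closes exactly this point by invoking the a priori decay of the gradient for axially symmetric $D$-solutions, $|\nabla u^r|+|\nabla u^z|\lesssim |x'|^{-(\frac54)^-}$ and $|\nabla u^{\theta}|\lesssim |x'|^{-(\frac{67}{64})^-}$ (from \cite{CPZ}, \cite{CJ}, \cite{W}); then the mean value inequality gives $|(u_iu_j)(y)-(u_iu_j)(x)|\lesssim |x-y|\,(1+|x'|)^{-\alpha-(\frac{67}{64})^-}\le |x-y|\,(1+|x'|)^{-2\alpha}$ since $\alpha<1<\tfrac{67}{64}$, and the kernel singularity is reduced to the integrable $|x-y|^{-2}$. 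Unless you substitute an equivalent quantitative local regularity estimate (for instance, interior gradient estimates showing $|\nabla u|\lesssim (1+|x'|)^{-\alpha}$ locally under \eqref{a04}), your near-field step does not close; note also that this is precisely where the axial symmetry hypothesis enters the lemma, which your proposal implicitly tries to avoid.
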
 
\begin{proof}
We will first derive an integral formula for $p$, that is 
\begin{align}\label{b02}
 p (x) = -\frac{1}{3}|u(x)|^2+P.V.\int \partial_{y_i} \partial_{y_j} \Gamma (x-y) (u_i u_j) (y) dy.
\end{align}
Actually, this formula can be found in many literatures, for example, \cite{S1}. For the completeness, we will give the 
proof here.

Note that $p$ satisfies the following equation:
\begin{align}\label{p_equ}
-\Delta p = \partial_i \partial_j (u_i u_j).
\end{align}
Fix $x \in \mathbb{R}^3$. Let $R \gg 1$ and $\phi \in C^{\infty}_c(B(x,2R))$ be the cut-off function satisfying 
\begin{equation}\label{b03}
\begin{cases}
\phi \equiv 1\  \text{in}\  B(x,R);\\
\phi \equiv 0\  \text{in}\  \mathbb{R}^3\setminus B(x,2R);\\
|\nabla \phi|\leq \frac{C}{R},\ |\nabla^2 \phi|\leq \frac{C}{R^2}.
\end{cases}
\end{equation}
Multiplying $\phi$ in both sides of \eqref{p_equ} and using 
\[
\Delta(\phi p) = \Delta p \phi +2\nabla \phi \cdot \nabla p + \Delta \phi p,
\]
we know that
\begin{align*}
-\Delta (\phi p) = \partial_i \partial_j (u_i u_j) \phi - 2\nabla \phi \cdot \nabla p - \Delta \phi p.
\end{align*}
Let $\Gamma (x) = \frac{1}{4\pi |x|}$ and do integration by parts, we see 
\begin{equation}\label{c2}
    \begin{split}
\phi p (x) =& \int \Gamma (x-y) \partial_{y_i} \partial_{y_j} (u_i u_j) (y) \phi (y) dy\\
            &- \int \Gamma (x-y) 2\nabla_y \phi (y) \cdot \nabla_y p (y) dy
               -\int \Gamma (x-y) \Delta_y \phi(y) p (y) dy\\
           =&\big( -\frac13|u(x)|^2+P.V.\int \partial_{y_i} \partial_{y_j} \Gamma (x-y) (u_i u_j) (y) \phi (y) dy \big)\\
           &+2\int \partial_{y_i}  \Gamma (x-y) (u_i u_j) (y) \partial_{y_j} \phi (y) dy\\
           & +\int  \Gamma (x-y) (u_i u_j) (y) \partial_{y_i} \partial_{y_j}\phi (y) dy\\
           &+\int \Gamma (x-y) \Delta_y \phi(y) p (y) dy+2\int \nabla_y \Gamma (x-y) \cdot \nabla_y \phi (y)  p (y) dy\\      
           =&I_1+\cdots+I_5.
   \end{split}
\end{equation}

Let's show the detail of the integration by parts. The key point here is $I_1$ contains a singular integral since 
\[
\big| \partial_{y_i} \partial_{y_j} \Gamma (x-y) \big|\leq \frac{C}{|x-y|^3}
\]
and thus is not integrable when $y$ is close to $x$. So we should be careful when doing IBP (short for integration by parts). Here we only show how to get $I_1$ by IBP. Let $B(R):=B(x,R)$. The first IBP is to move $\partial_{y_i}$ to $\Gamma$:
\begin{align}
   & \int \Gamma (x-y) \partial_{y_i} \partial_{y_j} (u_i u_j) (y) \phi (y) dy \notag\\
= \lim_{\epsilon \rightarrow 0}
   &\int_{B(2R)\setminus B(\epsilon)} \Gamma (x-y) \partial_{y_i} \partial_{y_j} (u_i u_j) (y) \phi (y) dy\notag\\
= \lim_{\epsilon \rightarrow 0}
   &\Big(-\int_{B(2R)\setminus B(\epsilon)} \partial_{y_i}\Gamma (x-y)  \partial_{y_j} (u_i u_j) (y) \phi (y) dy
      -\int_{B(2R)\setminus B(\epsilon)} \Gamma (x-y)  \partial_{y_j} (u_i u_j) (y) \partial_{y_i}\phi (y) dy\Big)
      \notag\\
   &+\int_{\partial B(2R)} \Gamma (x-y) \partial_{y_j} (u_i u_j) (y) n_i \phi (y) dS_y\label{c2_1}\\
   &- \lim_{\epsilon \rightarrow 0} \int_{\partial B(\epsilon)} \Gamma (x-y) \partial_{y_j} (u_i u_j) (y) n_i \phi (y) dS_y\label{c2_2}\\
 =&-\int \partial_{y_i}\Gamma (x-y)  \partial_{y_j} (u_i u_j) (y) \phi (y) dy
   -\int \Gamma (x-y)  \partial_{y_j} (u_i u_j) (y) \partial_{y_i}\phi (y) dy\notag,
\end{align}
where $n_i=\frac{y_i-x_i}{|x-y|}$ is the $i$-th component of the outer normal vector of $B(2R)$.
In the above, we have used the fact that \eqref{c2_1} equals $0$ because of $\phi=0$ on $\partial B(2R)$. In terms of \eqref{c2_2}, we have the following estimate:
\begin{align*}
 &\int_{\partial B(\epsilon)} \Gamma (x-y) \partial_{y_j} (u_i u_j) (y) n_i \phi (y) dS_y\\
\lesssim &\int_{\partial B(\epsilon)} \frac{1}{|x-y|} dS_y \max_{y\in \partial B(\epsilon)} |u(y)|^2\\
\lesssim &\frac{1}{\epsilon} \cdot \epsilon^2\max_{y\in \partial B(\epsilon)} |u(y)|^2\rightarrow 0\ as\ \epsilon \rightarrow 0.
\end{align*}
Next we act the second IBP after which the singular integral will come out.
\begin{align}
   \lim_{\epsilon \rightarrow 0}
   &-\int_{B(2R)\setminus B(\epsilon)} \partial_{y_i}\Gamma (x-y)  \partial_{y_j} (u_i u_j) (y) \phi (y) dy\notag\\
= \lim_{\epsilon \rightarrow 0}
   &\Big(\int_{B(2R)\setminus B(\epsilon)}\partial_{y_j} \partial_{y_i}\Gamma (x-y) (u_i u_j) (y) \phi (y) dy
   +\int_{B(2R)\setminus B(\epsilon)} \partial_{y_i}\Gamma (x-y) (u_i u_j) (y) \partial_{y_j}\phi (y) dy\Big)\notag\\
   &-\int_{\partial B(2R)}\partial_{y_i} \Gamma (x-y) n_j (u_i u_j) (y)  \phi (y) dS_y\notag\\
   & +\lim_{\epsilon \rightarrow 0} \int_{\partial B(\epsilon)} \partial_{y_i} \Gamma (x-y) n_j (u_i u_j) (y) 
    \phi (y)  dS_y\label{c2_3} \\
 =-\frac13&|u(x)|^2+P.V. \int \partial_{y_j} \partial_{y_i}\Gamma (x-y) (u_i u_j) (y) \phi (y) dy
 +\int \partial_{y_i}\Gamma (x-y) (u_i u_j) (y) \partial_{y_j}\phi (y) dy,\notag
\end{align}
where we have dealt with \eqref{c2_3} as follows.
\begin{align*}
 &\lim_{\epsilon \rightarrow 0}\int_{\partial B(\epsilon)} \partial_{y_i} \Gamma (x-y) n_j (u_i u_j) (y)
     \phi (y)  dS_y\\
=&\lim_{\epsilon \rightarrow 0}\int_{\partial B(\epsilon)} \frac{x_i-y_i}{4\pi |x-y|^3} 
      \frac{y_j-x_j}{|x-y|} (u_i u_j) (y)dS_y\\
=&-\lim_{\epsilon \rightarrow 0}\frac{1}{4\pi \epsilon^4} \int_{\partial B(\epsilon)} (x_i-y_i)(x_j-y_j)(u_i u_j) (y)dS_y\\
=& -\lim_{\epsilon \rightarrow 0}\frac{1}{4\pi \epsilon^4}\int_{\partial B(\epsilon)} (x_i-y_i)(x_j-y_j)
\big[(u_i u_j) (y)-(u_i u_j) (x)\big]dS_y\\
&-\lim_{\epsilon \rightarrow 0}\frac{(u_i u_j) (x)}{4\pi \epsilon^4} \int_{\partial B(\epsilon)} 
(x_i-y_i)(x_j-y_j)dS_y\\
=&J_1+J_2.
\end{align*}
For $J_1$, we know from the mean value inequality that
\begin{align*}
& \lim_{\epsilon \rightarrow 0}\frac{1}{4\pi \epsilon^4}\int_{\partial B(\epsilon)} (x_i-y_i)(x_j-y_j)
          \big[(u_i u_j) (y)-(u_i u_j) (x)\big]dS_y\\
\leq &\lim_{\epsilon \rightarrow 0} \frac{1}{4\pi \epsilon^4} \int_{\partial B(\epsilon)} (x_i-y_i)(x_j-y_j)
          \nabla (u_i u_j)(\xi)|x-y|dS_y\\
\leq & \lim_{\epsilon \rightarrow 0}\frac{1}{4\pi \epsilon^4} \epsilon^3\times 4\pi \epsilon^2  \max_{y\in \partial B(\epsilon)}|\nabla (u_i u_j)(y)|      =0.
\end{align*}
For $J_2$, we need to consider two cases: $i\neq j$ and $i=j$. 
When $i\neq j$, according to the symmetry of the function and the sphere, we have
\begin{align*}
    &\int_{\partial B(\epsilon)} (x_i-y_i)(x_j-y_j)dS_y\\
= &\int_{\begin{subarray}{l}\{y\in \partial B(\epsilon)\}\\ \{x_i-y_i>0\}\end{subarray}} (x_i-y_i)(x_j-y_j)dS_y
+\int_{\begin{subarray}{l}\{y\in \partial B(\epsilon)\}\\ \{x_i-y_i<0\}\end{subarray}} (x_i-y_i)(x_j-y_j)dS_y\\
=&0.
\end{align*}
When $i=j$, we have, for each $i$,
\begin{align*}
    \int_{\partial B(\epsilon)} (x_i-y_i)^2dS_y
= \frac13 \int_{\partial B(\epsilon)} |x-y|^2dS_y
=\frac{4\pi \epsilon^4}{3}.
\end{align*}
Summing $i=1,2,3$ gives
\begin{align*}
  \sum_{i=1}^3 \lim_{\epsilon \rightarrow 0}\frac{u_i^2  (x)}{4\pi \epsilon^4} \int_{\partial B(\epsilon)} 
(x_i-y_i)^2dS_y=\frac13 |u(x)|^2.
\end{align*}
Therefore,
\begin{align*}
J_2=-\lim_{\epsilon \rightarrow 0}\frac{(u_i u_j) (x)}{4\pi \epsilon^4} \int_{\partial B(\epsilon)} 
(x_i-y_i)(x_j-y_j)dS_y=-\frac13 |u(x)|^2.
\end{align*}

Let us then deal with $I_2$.
\begin{align*}
I_2 =& \int_{B(x,2R)-B(x,R)} \partial_{y_i}  \Gamma (x-y) (u_i u_j) (y) \partial_{y_j} \phi (y) dy\\
      \leq & \frac{C}{R^3} |B(x,2R)-B(x,R)| \max_{y\in B(x,2R)-B(x,R)} |u(y)|^2\\
      \leq & C\max_{y\in B(x,2R)-B(x,R)} |u(y)|^2.
\end{align*}
Since $u \rightarrow 0$ when $R \rightarrow \infty$, we know $I_2\rightarrow 0$ when $R \rightarrow \infty$.
Similarly, we can get $I_i\rightarrow 0$ for $i=3,4,5$ when $R \rightarrow \infty$. 
As a result, letting $R\rightarrow \infty$ in both sides of \eqref{c2} gives \eqref{b02}.
%\begin{align}\label{c3}
 %p (x) = -\frac{1}{3}|u(x)|^2+P.V.\int \partial_{y_i} \partial_{y_j} \Gamma (x-y) (u_i u_j) (y) dy.
%\end{align}

Now we will derive the decay rate of $p$ by \eqref{b02} under the assumption \eqref{a04} by using the method of 
Lemma 3.2 in \cite{CPZ}.
It is easy to check that $\partial_{y_j} \partial_{y_i} \Gamma (x-y)$ $(i,j=1,2,3)$ is a Calderon-Zygmud
kernel since it satisfies, for each $i,j$,
\begin{align}
&|\partial_{y_j} \partial_{y_i} \Gamma (x-y) |\leq \frac{C}{|x-y|^3},
\ |\nabla_y \partial_{y_j} \partial_{y_i} \Gamma (x-y) |\leq \frac{C}{|x-y|^4};\notag\\
&\int_{a<|x-y|<b}\partial_{y_j} \partial_{y_i} \Gamma (x-y) dy=0\ \text{for all} \ 0<a<b<\infty.\label{c3_2}
\end{align}
Also, we know from \eqref{c3_2} that
\begin{align*}
P.V. \int_{|x-y|\leq 1}\partial_{y_i} \partial_{y_j} \Gamma (x-y) dy=0.
\end{align*}

Now we decompose the integration in \eqref{b02} into three parts as follows.
\begin{align*}
 p (x) =& \int_{|x-y|\leq 1} \partial_{y_i} \partial_{y_j} \Gamma (x-y) \big[(u_i u_j) (y)- (u_i u_j) (x)\big]dy\\
 &+\int_{\{|x-y|\geq 1\}\cap \{|x'-y'|\geq \frac12\}} \partial_{y_i} \partial_{y_j} \Gamma (x-y) (u_i u_j) (y) dy\\
 &+\int_{\{|x-y|\geq 1\}\cap \{|x'-y'|\leq \frac12\}} \partial_{y_i} \partial_{y_j} \Gamma (x-y) (u_i u_j) (y) dy\\
 =&I_1+I_2+I_3.
\end{align*}
We next estimate $I_i$ $(i=1,2,3)$ one by one.
For $I_1$, by using the mean-value inequality, we have
\begin{align*}
I_1 \lesssim & \int_{\{|x-y|\leq 1\}} \big|\partial_{y_i} \partial_{y_j} \Gamma (x-y)\big| |x-y| |\nabla (u_i u_j)(\xi)|dy,\ |\xi| \in \big(\min\{|x|,|y|\},\max\{|x|,|y|\} \big)\\
\lesssim &  \int_{\{|x-y|\leq 1\}} \frac{1}{|x-y|^3} |x-y| |u(\xi)| |\nabla u(\xi)| dy\\
\lesssim & \int_{\{|x-y|\leq 1\}} \frac{1}{|x-y|^2}dy \frac{1}{(1+|\xi'|)^{\alpha}}
\frac{1}{(1+|\xi'|)^{\frac{67}{64}^{-}}}\\
\lesssim & \frac{1}{(1+|x'|)^{2\alpha}},
\end{align*} 
where we have used the assumption \eqref{a04} and the a priori estimate for $\nabla u$ in axially symmetric case.
Actually, from \cite{CPZ}, \cite{CJ} and \cite{W}, we know that
\begin{equation*}
\begin{split}
|\nabla u^r(x)|+|\nabla u^z(x)|\lesssim |x'|^{-\frac{5}{4}^{-}},\\
|\nabla u^{\theta}(x)|\lesssim |x'|^{-\frac{67}{64}^{-}}.
\end{split}
\end{equation*}

For $I_2$, we have
\begin{align*}
I_2 
\lesssim &\int_{\{|x-y|\geq 1\}\cap \{|x'-y'|\geq \frac12\}} \frac{1}{|x-y|^3} \frac{1}{(1+|y'|)^{2\alpha}}  dy\\
\lesssim & \int_{\{|x-y|\geq 1\}\cap \{|x'-y'|\geq \frac12\}}\frac{1}{(1+|y'|)^{2\alpha}} \int_{-\infty}^{\infty}\frac{1}{(|x'-y'|+|x_3-y_3|)^3} dy_3  dy'\\
\lesssim & \int_{\{|x'-y'|\geq \frac12\}}\frac{1}{(1+|y'|)^{2\alpha}} \frac{1}{|x'-y'|^2} dy'\\
=&\Big( \int_{\begin{subarray}{l}\{|x'-y'|\geq \frac12\}\\ \{|y'|\geq 2|x'|\}\end{subarray}}
   +\int_{\begin{subarray}{l}\{|x'-y'|\geq \frac12\}\\ \{\frac12 |x'| \leq |y'| \leq 2|x'|\}\end{subarray}}
   +\int_{\begin{subarray}{l}\{|x'-y'|\geq \frac12\}\\ \{|y'|\leq \frac12|x'|\}\end{subarray}}\Big)
   \frac{1}{(1+|y'|)^{2\alpha}} \frac{1}{|x'-y'|^2} dy'\\
 =& I_{21}+I_{22}+I_{23}.
\end{align*} 
For $I_{21}$, $|y'|\geq 2|x'|$ means that $|x'-y'| \approx |y'|$. Thus,
\begin{align*}
I_{21} 
\lesssim &\int_{ \{|y'|\geq 2|x'|\}} \frac{1}{(1+|y'|)^{2\alpha+2}} dy'\\
\lesssim &\int_{2|x'|}^{\infty} \frac{1}{(1+r)^{2\alpha+2}} r dr \  \text{(need $\alpha>0$)}\\
\lesssim & \frac{1}{(1+|x'|)^{2 \alpha}}.
\end{align*}
For $I_{22}$, $\frac12 |x'| \leq |y'| \leq 2|x'|$ means that $|x'-y'| \leq 3|x'|$. Thus,
\begin{align*}
I_{22} 
\lesssim &\int_{\{\frac12\leq |x'-y'| \leq 3|x'|\}}\frac{1}{|x'-y'|^2}dy'\frac{1}{(1+|x'|)^{2\alpha}} \\
\lesssim &\int_{\frac12}^{3|x'|} \frac{1}{r^2} r dr\frac{1}{(1+|x'|)^{2\alpha}}\\
\lesssim & \frac{\ln (e+|x'|)}{(1+|x'|)^{2\alpha}}.
\end{align*}
For $I_{23}$, $ |y'| \leq \frac12|x'|$ means that $|x'-y'| \approx |x'|$. Thus,
\begin{align*}
I_{23} 
\lesssim &\int_{\begin{subarray}{l}\{|x'-y'|\geq \frac12\}\\ \{|y'|\leq \frac12|x'|\}\end{subarray}}
   \frac{1}{(1+|y'|)^{2\alpha}}  dy' \frac{1}{(1+|x'|)^2}\\
   \lesssim & \int_{0}^{\frac12|x'|} \frac{1}{(1+r)^{2\alpha}} r dr \frac{1}{(1+|x'|)^2}\ \text{(need $\alpha<1$)}\\
\lesssim & \frac{1}{(1+|x'|)^{2\alpha}}.
\end{align*}
Combining the estimate of $I_{21}$, $I_{22}$ and $I_{23}$, we have
\begin{align*}
I_2 \lesssim  \frac{\ln (e+|x'|)}{(1+|x'|)^{2\alpha}}.
\end{align*}

For $I_3$, we have
\begin{align*}
I_3 
\lesssim & \int_{\begin{subarray}{l}\{|x-y|\geq 1\}\\ \{|x'-y'|\leq \frac12\}\end{subarray}} 
                  \frac{1}{|x-y|^3} \frac{1}{(1+|y'|)^{2\alpha}}  dy\\
\lesssim &  \int_{\begin{subarray}{l}\{|x_3-y_3|\geq \frac12\}\\ \{|x'-y'|\leq \frac12\}\end{subarray}} 
                  \frac{1}{(|x'-y'|+|x_3-y_3|)^3}  dy \frac{1}{(1+|x'|)^{2\alpha}}\\
 \lesssim &  \int_{\{|x_3-y_3|\geq \frac12\}} \int_{\{|x'-y'|\leq \frac12\}}
                  \frac{1}{(|x'-y'|+|x_3-y_3|)^3}  dy'dy_3 \frac{1}{(1+|x'|)^{2\alpha}}\\             
\lesssim & \frac{1}{(1+|x'|)^{2\alpha}}.
\end{align*} 

Combining the estimate of $I_1$, $I_2$ and $I_3$, we get the decay estimate \eqref{b01} for $p$.
\end{proof}
\section{Proof of Theorem \ref{thm1}}
In this section, we will prove Theorem \ref{thm1}.
\begin{proof}[Proof of Theorem \ref{thm1}]
Let $\eta(\xi)\in C_c^\infty(\mathbb{R})$ be the cut-off function satisfying
\begin{equation*}
\begin{cases}
\eta(\xi)=1,\ \text{if} \ |\xi|<1;\\
\eta(\xi)=0,\ \text{if} \ |\xi|>2;\\
0\leq \eta(\xi)\leq 1. 
\end{cases}
\end{equation*}
Define $\psi(x)=\eta(\frac{|x|}{R})$ where $R>1$ is a large number. Let $B_R:=B(0,R)$, then we have 
\begin{align*}
|\nabla \psi|\leq \frac{C}{R},\quad |\nabla^2 \psi| \leq \frac{C}{R^2};\\
\text{supp}\ \psi = B_{2R},\quad  \text{supp}\ \nabla \psi = B_{2R}\setminus B_{R}.
\end{align*}
 Multiplying
$u\psi$, integrating over $\mathbb{R}^3$ in \eqref{a01} and doing integration by parts, we have
\begin{align}\label{c5}
\begin{split}
\int |\nabla u|^2 \psi  dx
&= \frac12 \int |u|^2 \Delta \psi dx+ \int p u\cdot \nabla \psi dx+ \frac12 \int |u|^2 u\cdot \nabla \psi dx\\
&=K_1+K_2+K_3.
\end{split}
\end{align}
We are now going to show $K_i\rightarrow 0$ as $R\rightarrow \infty$ for $i=1,2,3$.  

First, for $K_1$, by H\"older inequality, we have
\begin{align*}
|K_1|
=&\Big| \frac12 \int_{B_{2R}\setminus B_R} |u|^2 \Delta \psi dx\Big| \\
\lesssim & \frac{1}{R^2}\Big(\int_{B_{2R}\setminus B_R} |u|^6 dx\Big)^{\frac13}\Big(\int_{B_{2R}\setminus B_R}  dx\Big)^{\frac23}\\
\lesssim &\Big(\int_{B_{2R}\setminus B_R} |u|^6 dx\Big)^{\frac13}.
\end{align*}
By Sobolev embedding, we know $u\in L^6(\mathbb{R}^3)$.
Thus $K_1\rightarrow 0$ when $R\rightarrow \infty$.

Next, in terms of $K_2$, one has
\begin{align*}
|K_2|
=&\Big| \int_{B_{2R}\setminus B_R} p u\cdot \nabla \psi dx\Big| \\
\leq & \int_{B_{2R}\setminus B_R}|p| |u| |\nabla \psi| dx\\
=&\int_{B_{2R}\setminus B_R} |p| |u|^{1-\epsilon} |u|^{\epsilon} |\nabla \psi| dx,
\end{align*} 
where $0<\epsilon\leq 1$ is a number to be chosen later.  
By H\"older inequality, we get
\begin{align*}
|K_2|
\leq &\int_{B_{2R}\setminus B_R} |p| |u|^{1-\epsilon} |u|^{\epsilon} |\nabla \psi| dx\\
\lesssim & \frac{1}{R}\Big(\int_{B_{2R}\setminus B_R} (|p| |u|^{1-\epsilon})^{\frac{6}{6-\ep}}  dx\Big)^{\frac{6-\epsilon}{6}}\Big(\int_{B_{2R}\setminus B_R} |u|^{\epsilon\cdot \frac{6}{\epsilon}} dx\Big)^{\frac{\epsilon}{6}}\\
\lesssim &\frac{1}{R}\Big(\int_{B_{2R}\setminus B_R} (|p| |u|^{1-\epsilon})^{\frac{6}{6-\ep}}   dx\Big)^{\frac{6-\epsilon}{6}}\Big(\int_{B_{2R}\setminus B_R} |u|^6 dx\Big)^{\frac{\epsilon}{6}} .
\end{align*}
If one can choose $\ep$ such that 
\begin{equation}\label{c6}
\widetilde{K_2}:=\frac{1}{R}\Big(\int_{B_{2R}\setminus B_R} (|p| |u|^{1-\epsilon})^{\frac{6}{6-\ep}}   dx\Big)^{\frac{6-\epsilon}{6}}\leq C
\end{equation}
holds, then $K_2\rightarrow 0$ as $R\rightarrow \infty$ due to the fact $u\in L^6(\mathbb{R}^3)$.
We claim that such $\ep$ do exist. Actually, from the assumption \eqref{a04} and Lemma \ref{lem1}, we have 
\begin{align*}
\widetilde{K_2}
\lesssim &\frac{1}{R}\Big(\int_{-2R}^{2R}\int_0^{2R} \Big[\frac{\ln (e+r)}{(1+r)^{2\alpha}}\cdot \frac{1}{(1+r)^{\alpha(1-\ep)}} \Big]^{\frac{6}{6-\ep}}  rdrdz\Big)^{\frac{6-\epsilon}{6}}\\
\lesssim &\frac{\ln R}{R}\Big(\int_{-2R}^{2R}\int_0^{2R}\frac{1}{(1+r)^{\alpha \frac{6(3-\ep)}{6-\ep}}}  rdrdz\Big)^{\frac{6-\epsilon}{6}}\\
\lesssim &R^{-\frac{\ep}{6}}\ln R \Big(\int_0^{2R} (1+r)^{1-\alpha  \frac{6(3-\ep)}{6-\ep}} dr\Big)^{\frac{6-\epsilon}{6}}.
\end{align*}
For any fixed $\alpha\in (\frac{2}{3},1)$, choose $\ep=\ep(\alpha)$ such that
\begin{align}\label{c7}
1-\alpha \frac{6(3-\ep)}{6-\ep}\leq -1,
\end{align}
then \eqref{c6} holds. In fact, if $1-\alpha \frac{6(3-\ep)}{6-\ep}= -1$, then
\[
\widetilde{K_2}\lesssim R^{-\frac{\ep}{6}} \ln R (\ln R)^{\frac{6-\epsilon}{6}}\leq C.
\]
If $1-\alpha \frac{6(3-\ep)}{6-\ep}< -1$, then
\begin{align*}
\widetilde{K_2}
\lesssim R^{-\frac{\ep}{6}}\ln R (R^{2-\alpha \frac{6(3-\ep)}{6-\ep}}+1)^{\frac{6-\epsilon}{6}}
\lesssim R^{-\frac{\ep}{6}}\ln R\leq C.
\end{align*}
Consequently, from \eqref{c7}, we can choose $\ep$ satisfying 
\begin{align}\label{c8}
0<\ep\leq \min \{1, \frac{3(3\alpha-2)}{3\alpha-1}\}
\end{align}
to guarantee \eqref{c6} and thus obtain $K_2\rightarrow 0$.

Finally, we deal with $K_3$ by using the same method with $K_2$. Indeed,
\begin{align*}
|K_3|
=&\Big| \frac12 \int_{B_{2R}\setminus B_R} |u|^2 u\cdot \nabla \psi dx\Big| \\
\leq & \frac12 \int_{B_{2R}\setminus B_R} |u|^3 |\nabla \psi| dx\\
=&\frac12 \int_{B_{2R}\setminus B_R} |u|^{\epsilon} |u|^{3-\epsilon} |\nabla \psi| dx.
\end{align*} 
Here we choose the same $\ep$ with $K_2$, i.e.,  $\ep=\ep(\alpha)$ satisfy \eqref{c8}.
Then by H\"older inequality, one has
\begin{align*}
|K_3|
\leq &\frac12 \int_{B_{2R}\setminus B_R} |u|^{\epsilon} |u|^{3-\epsilon} |\nabla \psi| dx\\
\lesssim & \frac{1}{R}\Big(\int_{B_{2R}\setminus B_R} |u|^{\epsilon\cdot \frac{6}{\epsilon}} dx\Big)^{\frac{\epsilon}{6}}
\Big(\int_{B_{2R}\setminus B_R} |u|^{(3-\epsilon)\cdot \frac{6}{6-\ep}}  dx\Big)^{\frac{6-\epsilon}{6}}\\
\lesssim &\frac{1}{R}\Big(\int_{B_{2R}\setminus B_R} |u|^6 dx\Big)^{\frac{\epsilon}{6}}
\Big(\int_{B_{2R}\setminus B_R} |u|^{(3-\epsilon)\cdot \frac{6}{6-\ep}}  dx\Big)^{\frac{6-\epsilon}{6}} .
\end{align*}
According to the assumption \eqref{a04}, one has
\begin{align*}
\widetilde{K_3}:=&\frac{1}{R}\Big(\int_{B_{2R}\setminus B_R} |u|^{(3-\epsilon)\cdot \frac{6}{6-\ep}}  dx\Big)^{\frac{6-\epsilon}{6}}\\
\lesssim &\frac{1}{R}\Big(\int_{-2R}^{2R}\int_0^{2R} \frac{1}{(1+r)^{\alpha \frac{6(3-\ep)}{6-\ep}}}  rdrdz\Big)^{\frac{6-\epsilon}{6}}\\
\lesssim &R^{-\frac{\ep}{6}}\Big(\int_0^{2R} (1+r)^{1-\alpha \frac{6(3-\ep)}{6-\ep}}  dr\Big)^{\frac{6-\epsilon}{6}}.
\end{align*}
Similarly to $\widetilde{K_2}$, one can show $\widetilde{K_3}\leq C$ under \eqref{c8} and thus obtain $K_3\rightarrow 0$ as $R\rightarrow \infty$.

Let $R\rightarrow \infty$ in both sides of \eqref{c5}, we get
\begin{align*}
\int_{\mathbb{R}^3} |\nabla u|^2=0,
\end{align*}
which gives $u\equiv 0$.

\end{proof}

\section{Proof of Theorem \ref{thm2}}
In this section, we will prove Theorem \ref{thm2}. The main idea is to show \eqref{a04} holds from the assumption \eqref{a05}.
\begin{lem}\label{lem2}
Let $(u, p)$ be a solution of three dimensional Navier-Stokes equations \eqref{a01} with \eqref{a02} and \eqref{a03}. Let $\omega$ be the vorticity. Suppose that $\omega$ satisfies 
\begin{align}\label{d01}
|\omega(x)| \leq \frac{C}{(1+|x'|)^{\beta}},\ 1<\beta<2,
\end{align}
uniformly for $z$. Then $u$ satisfies
\begin{align}\label{d02}
|u(x)| \leq \frac{C}{(1+|x'|)^{\beta-1}},
\end{align}
uniformly for $z$.
\end{lem}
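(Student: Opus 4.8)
The plan is to reconstruct $u$ from $\omega$ via the Biot--Savart law and then estimate the resulting singular-ish integral by the same region decomposition used for the pressure in Lemma \ref{lem1}.

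\textbf{Step 1 (Biot--Savart representation).} Since $\nabla\cdot u=0$ one has $-\Delta u=\nabla\times\omega$, so it is natural to introduce
$$ v(x):=\frac{1}{4\pi}\int_{\mathbb{R}^3}\frac{\omega(y)\times(x-y)}{|x-y|^3}\,dy. $$
I would first check that this integral converges and is bounded: near $y=x$ the kernel $|x-y|^{-2}$ is locally integrable and $\omega$ is bounded there by \eqref{d01}; away from $y=x$ one uses the $x_3$-independent bound $\int_{\mathbb{R}}(a+|x_3-y_3|)^{-2}\,dy_3=2/a$ to reduce to the two-dimensional integral $\int_{\mathbb{R}^2}|x'-y'|^{-1}(1+|y'|)^{-\beta}\,dy'$, which converges because $\beta+1>2$. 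One then verifies $\nabla\times v=\omega$ and $\nabla\cdot v=0$ in the distributional sense.

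\textbf{Step 2 (decay estimate for $v$).} Mirroring Lemma \ref{lem1}, I would split
$$ 4\pi v(x)=\Big(\int_{|x-y|\le1}+\int_{\{|x-y|\ge1\}\cap\{|x'-y'|\ge\frac12\}}+\int_{\{|x-y|\ge1\}\cap\{|x'-y'|<\frac12\}}\Big)\frac{\omega(y)\times(x-y)}{|x-y|^3}\,dy=:J_1+J_2+J_3. $$
On $|x-y|\le1$ one has $|y'|\approx|x'|$ (up to the additive constant) and $\int_{|x-y|\le1}|x-y|^{-2}\,dy<\infty$, so $|J_1|\lesssim(1+|x'|)^{-\beta}\lesssim(1+|x'|)^{-(\beta-1)}$. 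For $J_3$, there $|x_3-y_3|\gtrsim1$ and $|y'|\approx|x'|$, so $\int_{|x'-y'|<1/2}\int_{|x_3-y_3|\ge1/2}|x_3-y_3|^{-2}\lesssim1$ gives again $|J_3|\lesssim(1+|x'|)^{-\beta}$. For $J_2$, integrate first in $y_3$ to bound it by $\int_{|x'-y'|\ge1/2}|x'-y'|^{-1}(1+|y'|)^{-\beta}\,dy'$, then split the $y'$-plane into $\{|y'|\ge2|x'|\}$, $\{\tfrac12|x'|\le|y'|\le2|x'|\}$, $\{|y'|\le\tfrac12|x'|\}$ exactly as for $I_{21},I_{22},I_{23}$ in Lemma \ref{lem1}: the three pieces are each $\lesssim(1+|x'|)^{1-\beta}$, using $\beta>1$ in the first and $\beta<2$ in the third (so that $\int_0^{|x'|}(1+r)^{1-\beta}\,dr\approx|x'|^{2-\beta}$). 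Summing gives $|v(x)|\lesssim(1+|x'|)^{-(\beta-1)}$.

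\textbf{Step 3 (identification $u=v$ and conclusion).} The field $w:=u-v$ has $\nabla\times w=0$ and $\nabla\cdot w=0$, hence $-\Delta w=\nabla\times\nabla\times w-\nabla(\nabla\cdot w)=0$ componentwise; $w$ is bounded since $u\in L^6(\mathbb{R}^3)$ is smooth with $u(x)\to0$ as $|x|\to\infty$ by \eqref{a02} and $v$ is bounded by Step 2. Liouville's theorem for harmonic functions forces $w$ to be constant, and letting $|x'|\to\infty$ (where both $u\to0$ by \eqref{a02} and $v\to0$ by Step 2) shows $w\equiv0$, i.e. $u=v$, which is \eqref{d02}. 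Theorem \ref{thm2} then follows by applying Lemma \ref{lem2} with $\beta=\alpha+1\in(\tfrac53,2)\subset(1,2)$, which yields precisely \eqref{a04}, and invoking Theorem \ref{thm1}. The main obstacle is Step 1: because $\omega$ decays only in $|x'|$ and not in $z$, the very convergence and boundedness of the Biot--Savart integral, as well as the identification $u=v$, must be wrung out of the $z$-independent estimate $\int_{\mathbb{R}}(a+|x_3-y_3|)^{-2}\,dy_3=2/a$ together with the a priori facts (boundedness and vanishing at infinity) about $u$; once the representation is secured, Step 2 is a routine variant of the pressure estimate already performed.
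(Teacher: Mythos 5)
Your argument is essentially sound and lands on the same representation and the same kernel estimate as the paper, but you reach the representation by a different route. The paper starts from $-\Delta u=\nabla\times\omega$, multiplies by a cut-off $\phi$ supported in $B(x,2R)$, integrates by parts as in Lemma \ref{lem1}, and lets $R\to\infty$; the annulus error terms vanish because $u\to0$ at infinity and because $\frac{1}{R^{2}}\int_{B(x,2R)\setminus B(x,R)}(1+|y'|)^{-\beta}dy\lesssim R^{1-\beta}\to0$ (here $\beta>1$ is used), so one obtains $u(x)=-\int\nabla_y\Gamma(x-y)\times\omega(y)\,dy$ directly, with no need to ever verify that the Biot--Savart field has the correct curl and divergence. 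You instead define $v$ by the Biot--Savart integral, prove its decay, and identify $u=v$ by observing that $w=u-v$ is bounded, curl-free and divergence-free, hence harmonic and constant, and the constant is zero by letting $|x'|\to\infty$; this is a legitimate and in some ways more conceptual identification, and your Step 2 estimate (your three-region split, or equivalently the paper's device of integrating out $y_3$ first and then splitting the $y'$-plane, using $\beta>1$ and $\beta<2$ exactly where you say) is correct and gives $(1+|x'|)^{-(\beta-1)}$. The caveat is that the verification you defer in Step 1, namely $\nabla\times v=\omega$ and $\nabla\cdot v=0$, is not free under the stated decay: $\Gamma*\omega$ itself is not absolutely convergent (no decay in $z$, only $(1+|y'|)^{-\beta}$ with $\beta<2$), and differentiating under the integral produces the borderline kernel $\nabla^2\Gamma\sim|x-y|^{-3}$, so the check requires a principal-value/cut-off computation exploiting $\nabla\cdot\omega=0$, of the same flavor as the singular-integral bookkeeping in Lemma \ref{lem1}. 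That is exactly the work the paper's derivation of \eqref{d04} avoids, which is what its route buys; your route buys a cleaner separation between "the Biot--Savart field decays" and "the solution equals the Biot--Savart field", at the price of that verification plus the (standard, but worth stating) facts that $D$-solutions are smooth and bounded. Your reduction of Theorem \ref{thm2} to Theorem \ref{thm1} via $\beta=\alpha+1$ coincides with the paper's.
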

\begin{rem}
We remark that in Lemma \ref{lem2}, there is no restriction that solutions should be axially symmetric.
\end{rem}
\begin{proof}
We know from $\nabla \cdot u=0$ and $-\Delta u=\nabla \times (\nabla \times u)-\nabla (\nabla \cdot u)$ that
\[
-\Delta u=\nabla \times \omega.
\]
Let $\phi$ be the cut-off function defined by \eqref{b03}. A similar calculation as in the proof of Lemma \ref{lem1}
shows that
\begin{equation}\label{d03}
    \begin{split}
\phi u (x) =& \int \Gamma (x-y) \nabla_{y} \times \omega (y) \phi (y) dy\\
            &- \int 2\Gamma (x-y) \nabla_y \phi (y) \cdot \nabla_y u (y) dy
               -\int \Gamma (x-y) \Delta_y \phi(y) u (y) dy\\
           =&-\int \nabla_{y} \Gamma (x-y)  \times \omega (y) \phi (y) dy 
           -\Gamma (x-y)   \omega (y) \times\nabla_{y}\phi (y) dy\\
           &+\int \Gamma (x-y) \Delta_y \phi(y) u (y) dy+2\int \nabla_y \Gamma (x-y) \cdot \nabla_y \phi (y)  u (y) dy.
   \end{split}
\end{equation}
Let $R\rightarrow \infty$ in both sides of \eqref{d03}, we derive
\begin{align}\label{d04}
 u (x) = -\int  \nabla_{y}\Gamma (x-y) \times \omega (y) dy.
\end{align}

Now we will use \eqref{d04} to prove the decay of $u$ under the assumption \eqref{d01}.
The method we use here is similar with \cite{KTW}. The difference is that we will first integrate over $y_3$ and get an estimate for $u$ in terms of an integral over $y'$ instead of $y$. In fact, according to \eqref{d04}, we know that
\begin{equation}\label{d05}
\begin{split}
|u(x)| &\lesssim \int_{\mathbb{R}^3} \frac{1}{|x-y|^2}|\omega(y)|dy\\
         &\lesssim \int_{\mathbb{R}^3} \frac{1}{|x-y|^2}\frac{1}{(1+|y'|)^{\beta}}dy\\
         &\lesssim \int_{\mathbb{R}^2}\int_{-\infty}^{\infty} \frac{1}{|x'-y'|^2+(x_3-y_3)^2}dy_3\frac{1}{(1+|y'|)^{\beta}}dy'\\
         &\lesssim \int_{\mathbb{R}^2}\frac{1}{|x'-y'|}\frac{1}{(1+|y'|)^{\beta}}dy'.
\end{split}
\end{equation}
We decompose the right hand side of \eqref{d05} into three parts as follows.
\begin{align*}
|u(x)| &\lesssim \int_{\mathbb{R}^2}\frac{1}{|x'-y'|}\frac{1}{(1+|y'|)^{\beta}}dy'\\
         &\lesssim \left(\int_{|x'-y'|<\frac{|x'|}{2}}+ \int_{\frac{|x'|}{2}<|x'-y'|<3|x'|}+\int_{3|x'|<|x'-y'|}\right)
         \frac{1}{|x'-y'|}\frac{1}{(1+|y'|)^{\beta}}dy'\\
         &=I_1+I_2+I_3.
\end{align*}
Let's deal with $I_i(i=1,2,3)$ one by one.

For $I_1$, $|x'-y'|<\frac{|x'|}{2}$ implies $\frac{|x'|}{2}<|y'|<\frac{3|x'|}{2}$, i.e., $|y'|\approx |x'|$. Thus 
\begin{align*}
I_1 & \lesssim \int_{|x'-y'|<\frac{|x'|}{2}}  \frac{1}{|x'-y'|}\frac{1}{(1+|y'|)^{\beta}}dy'\\
      & \lesssim \frac{1}{(1+|x'|)^{\beta}}\int_{|x'-y'|<\frac{|x'|}{2}} \frac{1}{|x'-y'|}dy'\\
      & \lesssim \frac{1}{(1+|x'|)^{\beta}}\int_0^{\frac{|x'|}{2}} \frac{1}{r}rdr\\
      & \lesssim \frac{1}{(1+|x'|)^{\beta-1}}.
\end{align*}
For $I_2$, $\frac{|x'|}{2}<|x'-y'|<3|x'|$ implies $|y'|<4|x'|$. Thus
\begin{align*}
I_2 & \lesssim \int_{\frac{|x'|}{2}<|x'-y'|<3|x'|}\frac{1}{|x'-y'|}\frac{1}{(1+|y'|)^{\beta}}dy'\\
      & \lesssim \frac{1}{|x'|}\int_{|y'|<4|x'|} \frac{1}{(1+|y'|)^{\beta}}dy'\\
      & \lesssim \frac{1}{|x'|}\int_0^{4|x|} (1+r)^{-\beta}rdr\ (\text{need}\ \beta<2)\\
      & \lesssim \frac{1}{(1+|x'|)^{\beta-1}}.
\end{align*}
For $I_3$, $|x'-y'|>3|x'|$ implies $|y'|>2|x'|$ and $\frac{|y'|}{2}<|x'-y'|<\frac{3|y'|}{2}$, i.e., $|x'-y'|\approx |y'|$. Thus
\begin{align*}
I_3 & \lesssim \int_{|x'-y'|>3|x'|} \frac{1}{|x'-y'|}\frac{1}{(1+|y'|)^{\beta}}dy'\\
      & \lesssim  \int_{|y'|>2|x'|}\frac{1}{|y'|}\frac{1}{(1+|y'|)^{\beta}} dy\\
      & \lesssim \int_{2|x'|}^{\infty} (1+r)^{-\beta-1}rdr\ (\text{need}\ \beta>1)\\
      & \lesssim \frac{1}{(1+|x'|)^{\beta-1}}.
\end{align*}
By the estimate of $I_i (i=1,2,3)$, we get the desired estimate \eqref{d02}. 
\end{proof}
Now we are ready to prove Theorem \ref{thm2}.
\begin{proof}[Proof of Theorem \ref{thm2}]
It follows from Lemma \ref{lem2} and \eqref{a05} that \eqref{a04} holds. Then by Theorem \ref{thm1}, we get the conclusion in Theorem \ref{thm2}.
\end{proof}

\section*{Acknowledgement}
The author wishes to thank her advisor Prof. Zhen Lei for his support, thank her co-advisor Prof. Qi S. Zhang for sharing ideas, thank Dr. Zijin Li for helpful communications, thank Dr. Wenqi Lyu for finding an error in previous version. The author also wishes to thank Prof. Yat Sun Poon for approving her visit to UC, Riverside and thank China Scholarship Council for support.

%%      ---------------------------------------------------------------------
%%      ------------------- TABLE OF CONTENTS (OPTIONAL) --------------------
%%      ---------------------------------------------------------------------

% \tableofcontents

%%      ---------------------------------------------------------------------
%%      ---------------------------- BODY OF PAPER --------------------------
%%      ---------------------------------------------------------------------

%%      Please input or insert the body of your paper here.

%-----------------------------------------------------------------------------bibliography

\end{document}